\documentclass{article}

\usepackage[
  journal=JNCG,
  lang=british,
]{ems-journal}

\makeatletter
\renewcommand*\ps@titlepage{%
  \let\@oddfoot\@empty
  \let\@evenfoot\@empty
  \let\@oddhead\@empty
  \let\@evenhead\@empty
}
\makeatother

\usepackage{tikz-cd}
\usetikzlibrary{matrix}

\theoremstyle{plain}

\newtheorem{lemma}[subsubsection]{Lemma}

\newtheorem{thmx}{Theorem}

\theoremstyle{definition}

\theoremstyle{remark}

\numberwithin{equation}{section}

\newcommand{\CC}{\mathbb{C}}

\newcommand{\ZZ}{\mathbb{Z}}
\newcommand{\PP}{\mathbb{P}}

\newcommand{\Db}{\mathrm{D}^{b}}

\newcommand{\Hom}{\mathrm{Hom}}
\newcommand{\Ext}{\mathrm{Ext}}

\begin{document}

\title{A counterexample to a global-dimension bound for weighted projective lines}

\emsauthor{1}{
	\givenname{Bochao}
	\surname{Kong}
	\mrid{}
	\orcid{}}{B.~Kong}

\emsauthor{2}{
	\givenname{Yeqin}
	\surname{Liu}
	\mrid{1591694}
	\orcid{0000-0001-8231-7682}}{Y.~Liu}

\emsauthor{3}{
	\givenname{Yu}
	\surname{Shen}
	\mrid{}
	\orcid{0000-0001-5766-1596}}{Y.~Shen}

\Emsaffil{1}{
  \department{Department of Mathematics}
  \organisation{Michigan State University}
  \rorid{05hs6h993}
  \address{619 Red Cedar Road}
  \zip{48824}
  \city{East Lansing, MI}
  \country{USA}
  \affemail{kongboc1@msu.edu}
}

\Emsaffil{2}{
  \department{Department of Mathematics}
  \organisation{University of Michigan}
  \rorid{00jmfr291}
  \address{530 Church St}
  \zip{48109}
  \city{Ann Arbor, MI}
  \country{USA}
  \affemail{yqnl@umich.edu}
}

\Emsaffil{3}{
  \department{Department of Mathematics}
  \organisation{Florida State University}
  \rorid{05g3dte14}
  \address{1017 Academic Way}
  \zip{32306-4510}
  \city{Tallahassee, FL}
  \country{USA}
  \affemail{ys26k@fsu.edu}
}

\classification[16E10, 16G20, 18G80]{14H60}
\keywords{weighted projective line, derived equivalence, global dimension, tilting complex, radical-square-zero algebra}

\begin{abstract}
We observe that standard derived equivalences give a counterexample to a conjecture of Kalck on global dimension for weighted projective lines.  For the root stack
$\mathcal X=\PP^1\langle \infty,0,1;2,3,3\rangle$ we exhibit a $13$-dimensional radical-square-zero algebra $A$ such that
$$
\Db(\operatorname{coh}\mathcal X)\simeq \Db(\operatorname{mod}A),
\qquad
\operatorname{gldim}A=4>3.
$$
\end{abstract}

\maketitle

\section{Introduction}\label{sec:introduction}

Global dimension is not preserved by derived equivalence. This raises the question of how large the global dimension of a finite-dimensional algebra model can become inside a fixed derived category. Schröer's \emph{Atlas of Finite-Dimensional Algebras} records the following conjecture raised by Martin Kalck \cite[Conjecture~13.43]{SchroerAtlas2023}: if $\mathcal X$ is a weighted projective line of weight type $\mathbf p=(p_1,\ldots,p_t)$ and $A$ is a finite-dimensional algebra such that
$\Db(\operatorname{coh}\mathcal X)\simeq \Db(\operatorname{mod}A)$, 
then
\begin{equation}\label{eq:proposed-bound}
\operatorname{gldim}A\leq\max_i p_i.
\end{equation}
Such a bound would impose a geometric constraint on every finite-dimensional algebra model of the same derived category. In this note we prove the following main theorem.

\begin{thmx}\label{thm:main}
There is a weighted projective line $\mathcal X$ of weight type $(2,3,3)$ and a $13$-dimensional $\CC$-algebra $A$ such that
$$
\Db(\operatorname{coh}\mathcal X)\simeq\Db(\operatorname{mod}A),
\qquad
\operatorname{gldim}A=4>3=\max\{2,3,3\}.
$$
\end{thmx}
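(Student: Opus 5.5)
The plan is to realise $\Db(\operatorname{coh}\mathcal X)$, for $\mathcal X=\PP^1\langle\infty,0,1;2,3,3\rangle$, as the derived category of a tame hereditary algebra of type $\widetilde{E}_6$. We then pass by Koszul duality to a radical-square-zero algebra on the same tree, with an orientation chosen to contain a long path. First I will recall the standard facts. Since $\tfrac12+\tfrac13+\tfrac13>1$, the weight type $(2,3,3)$ is domestic. By Geigle--Lenzing, $\Db(\operatorname{coh}\mathcal X)\simeq\Db(\operatorname{mod}\Lambda)$ with $\Lambda$ the canonical algebra of type $(2,3,3)$. By Happel's classification (equivalently, a tilting bundle whose endomorphism ring is hereditary), this is in turn $\simeq\Db(\operatorname{mod}\CC Q')$ for any orientation $Q'$ of the extended Dynkin tree $\widetilde{E}_6$, which has $7=2+(1+2+2)$ vertices, matching the rank of $K_0(\mathcal X)$. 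Since all orientations of a tree are derived equivalent via reflections, I am free to choose the orientation.

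Next I will fix the quiver $Q$. Label the centre $c$ and the three arms $a_2,a_1$, $b_2,b_1$, $d_2,d_1$, and orient
\[
a_1\to a_2\to c\to b_2\to b_1,\qquad d_1\to d_2\to c .
\]
Set $A=\CC Q/J^2$, where $J$ is the arrow ideal. Then $\dim A=7+6=13$ (vertices plus arrows). For a radical-square-zero algebra on an acyclic quiver, $\Ext^n_A(S_i,S_j)$ has a basis given by the paths of length $n$ from $i$ to $j$ (in the appropriate convention). Indeed, the minimal projective resolution of $S_i$ is linear, with $n$-th term $\bigoplus P_j$ over the paths of length $n$ (up to direction conventions). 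Hence $\operatorname{pd}S_i$ equals the length of the longest path starting (or ending) at $i$. It follows that $\operatorname{gldim}A$ is the length of the longest path in $Q$, which is $4$, e.g.\ $a_1\to a_2\to c\to b_2\to b_1$.

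The key step is the derived equivalence $\Db(\operatorname{mod}A)\simeq\Db(\operatorname{mod}\CC Q^{\mathrm{op}})$. Because $Q$ is a tree, there is a unique function $h$ on vertices with $h(t)=h(s)+1$ for each arrow $s\to t$, normalised by $h(c)=0$. Put
\[
T=\bigoplus_i S_i[-h(i)]\in\Db(\operatorname{mod}A).
\]
A path of length $n$ from $i$ to $j$ contributes $\Ext^n(S_i,S_j)$, which sits in $\Hom(S_i[-h(i)],S_j[-h(j)][m])$ with $m=n-(h(j)-h(i))=0$, since $h(j)-h(i)=n$ along a path. Therefore $\Hom(T,T[m])=0$ for $m\neq0$, and the ambiguity in path direction conventions is absorbed by replacing $h$ with $-h$. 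The simples $S_i$ generate $\Db(\operatorname{mod}A)$ as a thick subcategory because $\operatorname{gldim}A<\infty$. So $T$ is a tilting object, and $\operatorname{End}(T)$ is the Ext-algebra of $A$ concentrated in degree $0$. Radical-square-zero algebras are Koszul, with Koszul dual the full path algebra $\CC Q^{\mathrm{op}}$. Since $Q$ is a tree, this path algebra is finite-dimensional and hereditary of type $\widetilde{E}_6$. Rickard/Happel's tilting theorem then gives the equivalence.

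Finally I will combine the chain $\Db(\operatorname{coh}\mathcal X)\simeq\Db(\CC\widetilde{E}_6)\simeq\Db(\operatorname{mod}A)$ with $\operatorname{gldim}A=4>3$. I expect the main obstacle to be the middle step: making precise that $\operatorname{End}(T)$ has no higher products or relations. This means checking that the Yoneda composition of $\Ext^1$'s realises all paths freely (Koszulity of $\CC Q/J^2$). The fallback is an explicit reflection/APR-tilt argument, or a direct computation of $\operatorname{End}(T)$ using the linear resolutions above.
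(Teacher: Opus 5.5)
Your proof is correct, but it runs the tilting in the opposite direction from the paper.

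\textbf{The paper's route.} The paper works on the hereditary side. It takes $T=\bigoplus S_i[h(i)]$ in $\Db(\operatorname{mod}\CC(R^{\operatorname{op}}))$. There the only nonzero morphism spaces between vertex simples are $\Hom$ on the diagonal and $\Ext^1$ along arrows, so the vanishing of $\Hom(T,T[n])$ for $n\neq0$ is immediate. Every product of two $\Ext^1$-classes dies in $\Ext^2=0$, and a count of $|Q_0|+|Q_1|$ identifies $\operatorname{End}(T)$ with $\CC R/J^2$.

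\textbf{Your route.} You tilt on the radical-square-zero side. This forces you to know the whole Yoneda algebra $\bigoplus_n\Ext^n_A(S,S)$ of $A=\CC Q/J^2$. You need its dimensions, which come from the linear resolutions. You need the simples to be perfect, which follows from $\operatorname{gldim}A<\infty$, as you note. Crucially, you need its multiplicative structure, which you get by citing Koszulity of radical-square-zero algebras and identifying the Ext-algebra with the quadratic dual $\CC Q^{\operatorname{op}}$. That citation is standard, so there is no gap. But it is exactly the ``main obstacle'' you flag, and it disappears if you apply the same shifted-simple construction to the hereditary algebra instead, since derived equivalence is symmetric.

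\textbf{Trade-off.} Your route makes the Koszul-duality picture explicit: it recovers the hereditary algebra as the Ext-algebra of $A$. The paper's route needs nothing beyond $\Ext^{\geq 2}=0$ over a hereditary algebra.

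\textbf{A small sign slip.} Take the convention you state: a path of length $n$ from $i$ to $j$ contributes to $\Ext^n(S_i,S_j)$, and $h(j)-h(i)=n$. Then the summands $S_i[-h(i)]$ put this class in degree $m=n+(h(j)-h(i))=2n$, not $0$. Your formula $m=n-(h(j)-h(i))$ belongs to $S_i[h(i)]$. As you remark, replacing $h$ by $-h$ repairs this. With the paper's right-module convention, where an arrow $i\to j$ gives $\Ext^1_A(S_j,S_i)$, your choice $S_i[-h(i)]$ is in fact the correct one.
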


The counterexample is obtained by combining two standard ingredients. Realize $\mathcal X$ as the root stack $\PP^1\langle\infty,0,1;2,3,3\rangle$. The domestic classification gives a hereditary model of affine type $\widetilde E_6$. For the orientation
$$
R:\quad
0\longrightarrow1\longrightarrow2\longrightarrow3\longrightarrow4,
\qquad
2\longrightarrow5\longrightarrow6,
$$
a height-shift of the vertex simples gives a tilting complex whose endomorphism algebra is
$$
A=\CC R/J^2.
$$
Thus $A$ has seven primitive idempotents and six radical generators, with all products of two radical elements zero. Its longest directed path has length four, so $\operatorname{gldim}A=4$. Hence the conjectured bound already fails by combining standard features of the domestic $\widetilde E_6$ model; the resulting counterexample has dimension only $13$.

\section*{Acknowledgments}
The second author thanks Qi Wang for many useful discussions. The counterexample was discovered with the assistance of OpenAI's GPT-5.6 Sol model; all mathematical arguments and references were independently verified by the authors.

\paragraph{Conventions.}\label{sec:notations}
All algebras are unital $\CC$-algebras and all modules are finite-dimensional right modules. We write $\Db(A)=\Db(\operatorname{mod}A)$ and use categorical composition for paths, so an arrow $i\to j$ lies in $e_jAe_i$.

\section{The weighted projective line as a root stack}\label{sec:weighted-line}

We use the algebro-geometric root-stack realization
\begin{equation}\label{eq:root-stack}
\mathcal X:=
\sqrt[2]{(\PP^1,\infty)}\times_{\PP^1}
\sqrt[3]{(\PP^1,0)}\times_{\PP^1}
\sqrt[3]{(\PP^1,1)}
=\PP^1\langle\infty,0,1;2,3,3\rangle.
\end{equation}
Thus $\mathcal X$ is the smooth proper Deligne--Mumford curve obtained by taking roots of orders $2,3,3$ along $\infty,0,1$; its coarse moduli space is $\PP^1$. See \cite{Cadman2007} for the root-stack construction.

Its orbifold Euler characteristic is
\[
\chi_{\mathrm{orb}}(\mathcal X)
=
2-\left(1-\frac12\right)
 -\left(1-\frac13\right)
 -\left(1-\frac13\right)
=\frac16>0,
\]
so $\mathcal X$ is domestic. By the standard domestic classification,
$\Db(\operatorname{coh}\mathcal X)$ is derived equivalent to a tame hereditary algebra of affine type $\widetilde E_6$ \cite{GeigleLenzing1987}. Since $\widetilde E_6$ is a tree, source and sink reflections allow us to choose any acyclic orientation \cite{Happel1988}. We may therefore choose
\begin{equation}\label{eq:rooted-quiver}
R:\quad
0\longrightarrow1\longrightarrow2\longrightarrow3\longrightarrow4,
\qquad
2\longrightarrow5\longrightarrow6,
\end{equation}
and put $H=\CC(R^{\operatorname{op}})$. Then we have
\begin{equation}\label{eq:wpl-H}
\Db(\operatorname{coh}\mathcal X)
\simeq
\Db(\operatorname{mod}H).
\end{equation}

\section{A shifted-simple tilting complex}\label{sec:shifted-simples}

The following standard shifted-simple calculation is the mechanism behind the example. It is the hereditary case of the usual Ext-algebra/Koszul-duality construction; we include the proof to fix the grading and opposite-quiver conventions.

\begin{lemma}\label{lem:shifted-simple-general}
Let $Q$ be a finite acyclic quiver without multiple arrows, and suppose there is a function
$ h:Q_0\longrightarrow\ZZ $
such that
$ h(j)=h(i)+1 $
for every arrow $i\to j$ of $Q$. Put $H_Q=\CC(Q^{\operatorname{op}})$, and let $S_i$ be the simple right $H_Q$-module at $i$. If $J\subset \CC Q$ is the arrow ideal, then
$$
T_Q=\bigoplus_{i\in Q_0}S_i[h(i)]
$$
is a tilting complex in $\Db(\operatorname{mod}H_Q)$ and
$\operatorname{End}_{\Db(H_Q)}(T_Q)\cong\CC Q/J^2$.
\end{lemma}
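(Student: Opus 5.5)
We compute all morphism spaces between the shifted simples, check that they vanish in nonzero degrees and generate, and then identify the composition law.

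\emph{Ext between simples.} Since $H_Q=\CC(Q^{\operatorname{op}})$ is hereditary, $\Ext^n_{H_Q}(S_i,S_j)=0$ for $n\ge 2$. Moreover $\Hom(S_i,S_j)=\CC\delta_{ij}$. The group $\Ext^1(S_i,S_j)$ has dimension equal to the number of arrows between $i$ and $j$ in $Q^{\operatorname{op}}$, in the direction dictated by the right-module convention. With the convention that an arrow $i\to j$ of $Q$ lies in $e_jAe_i$, we will check on the minimal projective resolution $0\to\bigoplus P_{\bullet}\to P_i\to S_i\to 0$ that, for an arrow $a:i\to j$ of $Q$, we get $\Ext^1(S_j,S_i)\cong\CC$ or $\Ext^1(S_i,S_j)\cong\CC$, whichever matches the grading. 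We choose the sign so that the nonzero $\Ext^1$ goes from the vertex of larger height to the vertex of smaller height. Concretely, for $a:i\to j$ we need
\[
\Hom(S_j[h(j)],S_i[h(i)][m])=\Ext^{h(i)-h(j)+m}(S_j,S_i),
\]
and with $h(j)=h(i)+1$ this is $\Ext^{m-1}(S_j,S_i)$.

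\emph{Tilting property.} The computation above gives a nonzero morphism of degree $m=0$ exactly when $\Ext^1(S_j,S_i)\neq 0$. The one step needing care is the direction convention, and I expect it to be the main obstacle. The orientation of $Q^{\operatorname{op}}$ versus right modules must produce $\Ext^1(S_j,S_i)\ne0$ for each arrow $i\to j$ of $Q$ and no other nonzero groups. If it came out the other way, the grading would give degree $-2$ instead, so the hypothesis on $h$ together with the conventions fixes it. Granting this, all graded Hom spaces between summands of $T_Q$ are concentrated in degree $0$: the diagonal identities, plus one copy of $\CC$ per arrow. The absence of multiple arrows in $Q$ gives dimension at most one. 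Two vertices of equal height have no arrows between them, and vertices at height distance $\ge2$ give Ext degrees outside $\{0,1\}$, so these pairs contribute nothing. Hence $\Hom(T_Q,T_Q[m])=0$ for $m\ne0$. For generation, the simples $S_i$ generate $\Db(\operatorname{mod}H_Q)$ because every finite-dimensional module has a finite composition series, and shifts do not affect the thick subcategory generated. So $T_Q$ is tilting.

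\emph{Endomorphism algebra.} By the above, $\operatorname{End}(T_Q)$ has basis the idempotents $e_i$ (the identities of the summands) together with one element $\alpha_a$ for each arrow $a$. Each $\alpha_a$ lies in the correct corner: it goes from the summand at $i$ to the summand at $j$, or the reverse, and we match this with the paper's convention $a\in e_j(\CC Q)e_i$. A composite of two such elements $\alpha_b\alpha_a$ along a path $i\to j\to k$ would be a degree-zero map between summands whose heights differ by $2$. Such a map lies in $\Ext^2=0$ by heredity, so all products of radical elements vanish. The assignment $e_i\mapsto e_i$, $a\mapsto\alpha_a$ therefore defines a surjective algebra map $\CC Q\to\operatorname{End}(T_Q)$ that kills $J^2$. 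Comparing dimensions, $|Q_0|+|Q_1|$ on both sides, shows the induced map $\CC Q/J^2\to\operatorname{End}(T_Q)$ is an isomorphism.
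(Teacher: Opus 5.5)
Your outline matches the paper's: heredity, $\Ext$ between vertex simples, composites landing in $\Ext^2=0$, generation by simples, and a dimension count. The gap is the one step with real content, the direction of $\Ext^1$. You treat it as a free choice (``whichever matches the grading'', ``Granting this''), and the direction you pick is wrong. It is forced by the conventions. $P_i=e_iH_Q$ is spanned by the paths of $Q^{\mathrm{op}}$ ending at $i$, so $\operatorname{rad}P_i\cong\bigoplus_{a\colon i\to k\text{ in }Q}P_k$. The minimal resolution $0\to\bigoplus_{i\to k}P_k\to P_i\to S_i\to0$ then gives $\Ext^1_{H_Q}(S_i,S_j)\cong\CC$ for every arrow $i\to j$ of $Q$, and $\Ext^1_{H_Q}(S_j,S_i)=0$. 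Already for $Q\colon 0\to1$ one has $S_1=P_1$ projective. So the nonzero $\Ext^1$ runs from smaller to larger height.

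Your degree count is also off. By your own formula $\Hom(S_j[h(j)],S_i[h(i)][m])\cong\Ext^{m-1}(S_j,S_i)$, a nonzero $\Ext^1(S_j,S_i)$ would sit in degree $m=2$, not $m=0$. Under your asserted direction, $\Hom(T_Q,T_Q[2])\neq0$ and $T_Q$ would not be tilting. The ``other'' direction lands in degree $0$, not $-2$. The two slips cancel in your stated conclusion, but the argument as written does not establish $\Hom(T_Q,T_Q[m])=0$ for $m\neq0$.

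The same direction decides the endomorphism algebra, so ``from the summand at $i$ to the summand at $j$, or the reverse'' cannot be left open. A generator $S_j[h(j)]\to S_i[h(i)]$ would lie in $e_i\operatorname{End}(T_Q)e_j$, and you would get $\CC Q^{\mathrm{op}}/J^2$. That algebra is generally not isomorphic to $\CC Q/J^2$, since the Gabriel quiver is an invariant. For the paper's quiver $R$, vertex $2$ has one incoming and two outgoing arrows, which no vertex of $R^{\mathrm{op}}$ has.

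The repair is to carry out the resolution computation above. Then $\Hom(S_i[h(i)],S_j[h(j)][m])\cong\Ext^{1+m}(S_i,S_j)$ is nonzero only for $m=0$. Distinct vertices not joined by an arrow have no $\Ext$ in any degree; this, rather than ``height distance $\geq2$'', is why such pairs contribute nothing. The generator attached to $a\colon i\to j$ lies in $e_j\operatorname{End}(T_Q)\,e_i$, matching $a\in e_j\,\CC Q\,e_i$. With that in place, the rest of your argument goes through as in the paper.
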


\begin{proof}
With the convention of Section~\ref{sec:notations}, an arrow $i\to j$ of $Q$ gives
$ \Ext^1_{H_Q}(S_i,S_j)\cong\CC. $
These are the only extension spaces between distinct vertex simples, while
$$
\Hom_{H_Q}(S_i,S_j)=0
$$
for $i\neq j$. Since $H_Q$ is hereditary, we have
$$
\Ext^r_{H_Q}(-,-)=0
\qquad
(r\geq2).
$$
Therefore
\begin{equation}\label{eq:shifted-hom}
\Hom_{\Db(H_Q)}
\bigl(S_i[h(i)],S_j[h(j)+n]\bigr)
\cong
\Ext^{h(j)+n-h(i)}_{H_Q}(S_i,S_j).
\end{equation}
If $i=j$, the right-hand side of \eqref{eq:shifted-hom} can be nonzero only for $n=0$. If $i\to j$ is an arrow, then $h(j)-h(i)=1$, so the corresponding $\Ext^1$ occurs again exactly for $n=0$. There are no other nonzero cases. Hence
$$
\Hom_{\Db(H_Q)}(T_Q,T_Q[n])=0
\qquad
(n\neq0).
$$
Moreover, $S_i=S_i[h(i)][-h(i)]$
belongs to the thick subcategory generated by $T_Q$ for every vertex $i$. The vertex simples generate $\Db(\operatorname{mod}H_Q)$, so $T_Q$ is a tilting complex.

It remains to compute its endomorphism algebra. The preceding calculation gives one primitive idempotent for each vertex and one radical generator for each arrow of $Q$. Under these identifications, every composition of two radical generators is a Yoneda product of two $\Ext^1$-classes, hence lies in an $\Ext^2$-group and is zero. So there is a surjective homomorphism
$$
\CC Q/J^2
\longrightarrow
\operatorname{End}_{\Db(H_Q)}(T_Q).
$$
Both sides have dimension $|Q_0|+|Q_1|$, so the homomorphism is an isomorphism.
\end{proof}

Apply Lemma~\ref{lem:shifted-simple-general} to $R$ with
$(h_0,\ldots,h_6)=(0,1,2,3,4,3,4)$. Then
$T=\bigoplus_{i=0}^6S_i[h_i]$ is a tilting complex over
$H=\CC(R^{\operatorname{op}})$ with endomorphism algebra
\begin{equation}\label{eq:def-A}
A:=\CC R/J^2.
\end{equation}
Thus $\dim_{\CC}A=7+6=13$, and Rickard's theorem \cite{Rickard1989}, together with \eqref{eq:wpl-H}, gives
\begin{equation}\label{eq:wpl-A}
\Db(\operatorname{coh}\mathcal X)\simeq\Db(\operatorname{mod}A).
\end{equation}

\section{Global dimension}\label{sec:global-dimension}

We now compute the global dimension of $A$.

\begin{lemma}\label{lem:rad-square-zero-gldim}
Let $Q$ be a finite acyclic quiver and let
$
B=\CC Q/J^2.
$
For a vertex $j$, the projective dimension of the simple right $B$-module $S_j$ is the maximal length of a directed path in $Q$ ending at $j$. Consequently,
$$
\operatorname{gldim}B
=
\max\{\text{length of a directed path in }Q\}.
$$
\end{lemma}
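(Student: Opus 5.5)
The plan is to compute the projective resolution of each simple $B$-module directly from the radical-square-zero structure, and then to induct along the acyclic order of $Q$. With the conventions of Section~\ref{sec:notations}, the indecomposable projective right module at $j$ is $P_j=e_jB$. As a vector space it is spanned by $e_j$ and by the arrows lying in $e_jBe_i$, that is, the arrows $i\to j$ ending at $j$. Since $J^2=0$ in $B$, the radical $\operatorname{rad}P_j=e_jJ$ is annihilated by $J$ and is therefore semisimple. An arrow $a\colon i\to j$ satisfies $a\,e_i=a$, so it spans a copy of $S_i$. This gives a short exact sequence
\[
0\longrightarrow \bigoplus_{a\colon i\to j} S_i \longrightarrow P_j \longrightarrow S_j\longrightarrow 0 ,
\]
in which $P_j\to S_j$ is a projective cover. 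Here the sum runs over the arrows $i\to j$ counted with multiplicity.

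From this sequence I will derive the recursion: if no arrow ends at $j$, then $\operatorname{pd}S_j=0$; otherwise
\[
\operatorname{pd}S_j = 1+\max_{i\to j}\operatorname{pd}S_i .
\]
To justify it, let $K=\operatorname{rad}P_j$. If $K=0$, then $S_j=P_j$ is projective. If $K\neq 0$, then $S_j$ is not projective, because $P_j\to S_j$ is a projective cover with nonzero kernel. The standard dimension-shifting facts then give $\operatorname{pd}S_j=\operatorname{pd}K+1$. This holds both when $K$ is non-projective and when $K$ is projective and nonzero, since in the latter case $\operatorname{pd}S_j=1$. Finally, $\operatorname{pd}K$ equals the maximum of $\operatorname{pd}S_i$ over its direct summands.

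Since $Q$ is acyclic, the maximal length $\ell(j)$ of a directed path ending at $j$ is finite. It satisfies $\ell(j)=0$ when no arrow ends at $j$, and $\ell(j)=1+\max_{i\to j}\ell(i)$ otherwise. Induction on $\ell(j)$ then gives $\operatorname{pd}S_j=\ell(j)$, because both quantities satisfy the same recursion with the same initial condition. For the consequence, the global dimension of the finite-dimensional algebra $B$ equals the maximum of $\operatorname{pd}S_j$ over its simple modules, since every module has a finite composition series. That maximum is $\max_j\ell(j)$, which is the maximal length of a directed path in $Q$.

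The only delicate step is keeping the conventions straight, namely that the radical of $P_j$ involves the arrows \emph{ending} at $j$ rather than those starting there. Getting this backwards would produce the dual statement about paths starting at $j$. The global dimension is unaffected by this choice, but the pointwise statement about $\operatorname{pd}S_j$ depends on it. I would verify it against the stated convention that an arrow $i\to j$ lies in $e_jBe_i$, using right multiplication by the idempotents.
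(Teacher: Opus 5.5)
Your proof is correct and follows the same route as the paper: both identify $\operatorname{rad}P_j=e_jJ\cong\bigoplus_{i\to j}S_i$, a sum over the arrows ending at $j$, and propagate this through syzygies. The paper iterates $\Omega$ directly, indexing the summands of $\Omega^rS_j$ by paths of length $r$ ending at $j$; you instead package the same step as the recursion $\operatorname{pd}S_j=1+\max_{i\to j}\operatorname{pd}S_i$ with an explicit dimension-shifting justification, which is a slightly more careful version of the same argument.
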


\begin{proof}
For $P_j=e_jB$ one has
$\Omega S_j=\operatorname{rad}P_j\cong\bigoplus_{i\to j}S_i$.
Iterating, $\Omega^rS_j$ is the direct sum, with multiplicities, of the simples indexed by directed paths of length $r$ ending at $j$. Hence $\operatorname{pd}S_j$ is the maximal length of such a path. Taking the maximum over the vertex simples gives the formula for $\operatorname{gldim}B$.
\end{proof}

To prove Theorem~\ref{thm:main}, note that we have \eqref{eq:wpl-A} and $\dim_{\CC}A=13$.
For $R$, the paths
$0\to1\to2\to3\to4$ and $0\to1\to2\to5\to6$ have length four, and no longer directed path exists. Hence Lemma~\ref{lem:rad-square-zero-gldim} gives
\begin{equation}\label{eq:gldim-A}
\operatorname{gldim}A=4>3=\max\{2,3,3\}.
\end{equation}

\bibliographystyle{emss}
\bibliography{ref}

@incollection{GeigleLenzing1987,
  author    = {Geigle, Werner and Lenzing, Helmut},
  title     = {A class of weighted projective curves arising in representation theory of finite-dimensional algebras},
  booktitle = {Singularities, Representation of Algebras, and Vector Bundles},
  series    = {Lecture Notes in Mathematics},
  volume    = {1273},
  publisher = {Springer},
  year      = {1987},
  pages     = {265--297}
}

@book{Happel1988,
  author    = {Happel, Dieter},
  title     = {Triangulated Categories in the Representation Theory of Finite-Dimensional Algebras},
  series    = {London Mathematical Society Lecture Note Series},
  volume    = {119},
  publisher = {Cambridge University Press},
  year      = {1988}
}

@article{Rickard1989,
  author  = {Rickard, Jeremy},
  title   = {Morita theory for derived categories},
  journal = {J. London Math. Soc. (2)},
  volume  = {39},
  year    = {1989},
  pages   = {436--456}
}

@unpublished{SchroerAtlas2023,
  author = {Schröer, Jan},
  title  = {Atlas of Finite-Dimensional Algebras},
  year   = {2023},
  note   = {Version of 29 December 2023, available from the author's webpage}
}

@article{Cadman2007,
  author  = {Cadman, Charles},
  title   = {Using stacks to impose tangency conditions on curves},
  journal = {Amer. J. Math.},
  volume  = {129},
  number  = {2},
  year    = {2007},
  pages   = {405--427}
}

\end{document}